\newdimen\plusheight
\def\+{\;\lower\plusheight\hbox{$+$}\;}  
\newdimen\minusheight
\def\-{\;\lower\minusheight\hbox{$-$}\;}
\newdimen\cdotsheight
\def\cds{\lower\cdotsheight\hbox{$\cdots$}}
\numberwithin{equation}{section}
\newtheorem{thm}{Theorem}[section]
\newtheorem{prop}[thm]{Proposition}
\newtheorem{cor}{Corollary}
\theoremstyle{definition}
\newtheorem{defn}{Definition}[section]
\newtheorem{exmp}{Example}[section]
\theoremstyle{remark}
\newtheorem{rem}{\bf{Remark}}
\newtheorem{note}{\bf{Note}}
\numberwithin{equation}{section}
\begin{document}

\setcounter {page}{1}
\title{On rough continuity and rough $I$-continuity of real functions}

\author{Amar Kumar Banerjee and Anirban Paul}

\address[A.K.Banerjee]{Department of Mathematics, The University of Burdwan, Golapbag, Burdwan-713104, West Bengal, India.}

\email{ akbanerjee1971@gmail.com, akbanerjee@math.buruniv.ac.in }

\address[A.Paul]{Department of Mathematics, The University of Burdwan, Golapbag, Burdwan-713104, West Bengal, India.}
         
\email{ paulanirban310@gmail.com}

\begin{abstract} 
In this paper, we have studied first the idea of rough continuity of real valued functions of real variables and then we have discussed some important properties of rough continuity. Then we study the idea of rough $I$-continuity of real valued functions and find the relation  between rough $I$-continuity and rough continuity. We also introduce the notion of rough $(I_1, I_2)$-continuity and rough $I^*$-continuity of real valued functions and discuss some properties on this two types on continuity.
\end{abstract}
\maketitle
\author{}

\textbf{Key words and phrases:} Rough continuity, Rough $I$-continuity, Rough $I^*$-continuity, Rough $(I_1, I_2)$-continuity, (AP) condition.

\textbf {(2020) AMS subject classification :} 54A20, 40A35. \\
\section{Introduction}
The notion of statistical convergence of sequences of real numbers was given independently by Fast \cite{f} and Steinhaus \cite{q} as a generalization of ordinary convergence. Then over the years lot of development were made in this area (see \cite{g,p}). The notion of $I$-convergence of the sequences of real numbers which is a generalization of the notion of statistical convergence was given by Kostyrko et. al. \cite{h} using the structure of the ideal $I$ of subsets of the set of natural numbers. An another type of convergence which is closely related to the ideas of $I$-convergence is the idea of $I^*$-convergence given by Kostyrko et. al. \cite{4th4}. It is seen in \cite{h} that these notions are equivalent if and only if the ideal satisfies the property (AP). Several works were done in recent years on $I$-convergence (see \cite{3,5,ownpaper,6,5thpaper,i,j,m}).\\
The notion of rough convergence of sequences in a finite dimensional space was introduced by Phu \cite{R2} in 2001. In 2013, S. K. Pal et. al. \cite{janaref} introduced the notion rough ideal convergence using the concepts of $I$-convergence and rough  convergence and later in 2014, E. D\"undar et. al. \cite{R7} gave the idea of of rough $I$-convergence in normed linear spaces independently. In \cite{p5/1}, V. Bal\'a\u{z} et. al. introduced the notion of $I$-continuity of functions $f: \mathbb{R}\mapsto \mathbb{R}$ as a generalization of the statistical continuity of the functions introduced by J. \u{C}erve\u{n}ansk\'y in \cite{p5/2}. In their paper, V. Bal\'a\u{z} et. al. \cite{p5/1} introduced the notion of $I$-continuity, $(I_1, I_2)$-continuity and $I^*$-continuity of real valued functions, where $I$, $I_1$, $I_2$ are the ideals on the set of natural numbers. It is observed from \cite{p5/1} that $I$-continuity and ordinary coincide for any admissible ideal $I$. This motivates us to investigate where such result continue to hold for the case of rough $I$-continuity.\\
In our present work, using the concept of rough convergence of real sequences and $I$-continuity of real valued functions we have introduced the notion of rough $I$-continuity, rough $(I_1, I_2)$-continuity and rough $I^*$-continuity of real valued functions of real variables. Then we have discussed some properties of these ideas and have found out relations between them. Also we have further verified how far some of the results which are true in case of $I$-continuity, $(I_1, I_2)$-continuity, $I^*$-continuity continue to hold in case of rough $I$-continuity, rough $(I_1, I_2)$-continuity and rough $I^*$-continuity, where $I$, $I_1$, $I_2$ are ideals on the set of natural numbers. Likely in the case of continuity and $I$-continuity we have shown that the concept of rough continuity and rough $I$-continuity are equivalent for any non trivial admissible ideals $I$.\\
Before going to the main results we will recall some basic definitions and notions which will be needed in sequel.
\section{Preliminaries}
Throughout our discussion $\mathbb{R}$, $\mathbb{N}$ will denotes the set of all real numbers and set of all natural numbers respectively. $I$, $I_1$, $I_2$ denote the non trivial admissible ideals of the set of $\mathbb{N}$ unless otherwise stated.
\begin{defn}\cite{f}
Let $K$ be a subset of the set of natural numbers $\mathbb{N}$ and let us denote the set $K_i =\{k\in K : k\leq i\}$. Then the natural density of $K$ is given by $d(K)=\displaystyle{\lim_{i\rightarrow \infty}}\frac{|K_i|}{i}$, where $|K_i|$ denotes the cardinality of the set $K_i$. 
\end{defn}
\begin{defn}\cite{f}
A sequence $\{x_n\}_{n\in\mathbb{N}}$ of real numbers is said to be statistically convergent to $x$ if for any $\varepsilon >0$, $d(A(\varepsilon))=0$, where $A(\varepsilon)=\{n\in \mathbb{N}: |x_n - x|\geq \varepsilon\}$.
\end{defn}
Let $I$ be a collection of subsets of a set $S$. Then $I$ is called an ideal on $S$ if $(i)$ $A, B\in I$ $\Rightarrow A\cup B\in I$ and $(ii)$ $A\in I$ and $B\subset A$ $\Rightarrow B\in I$ \cite{t}.\\
An ideal $I$ on $S$ is called admissible if it contains all the singletons, that is, $\{s\}\in I$ for each $s\in S$. $I$ is called nontrivial if $S\notin I$ and $I\neq \phi$ \cite{t}. From the definition it follows that $\phi\in I$.\\
If $S=\mathbb{N}$, the set of all positive integers then $I$ is called an ideal on $\mathbb{N}$. We will denote by Fin the ideal of all finite subsets of a given set $S$. \\
If $I$ is a non trivial ideal on $S$, then the class $F(I)=\{M\subset \mathbb{N}: \text{there exists } A\in I\:\: \text{such that }\: M=\mathbb{N}\setminus A\}$ is a filter on $S$, called the filter associated with $I$.

\begin{defn}\cite{h}
An admissible ideal $I\subset 2^\mathbb{N}$ is said to satisfy the condition (AP) if for every countable family of mutually disjoint sets $\{A_1, A_2,\cdots\}$ belonging to $I$ there exists a countable family of sets $\{B_1, B_2,\cdots\}$ such that the symmetric difference $A_j\Delta B_j$ is a finite set for each $j\in\mathbb{N}$ and $B=\displaystyle{\bigcup_{j=1} ^ \infty} B_j \in I$. Several example of countable family satisfying (AP) are seen in \cite{h}.
\end{defn}
\begin{defn}\cite{h,t**}
Let $(X, ||\cdot||)$ be a normed linear space and $I\subset 2^\mathbb{N}$ be a non-trivial ideal. A sequence $\{x_n\}_{n\in\mathbb{N}}$ of elements of $X$ is said to be $I$-convergent to $x\in X$ if for each $\varepsilon >0$ the set $A(\varepsilon)=\{n\in\mathbb{N}: ||x_n - x||\geq \varepsilon\}$ belongs to $I$. The element $x$ is here called the $I$-limit of the sequence $\{x_n\}_{n\in\mathbb{N}}$.
\end{defn}
It should be noted here that if $I$ is an admissible ideal then usual convergence in $X$ implies $I$-convergence in $X$.
If $I_d$ denotes the class of all $A\subset \mathbb{N}$ with $d(A)=0$. Then $I_d$ is non trivial admissible ideal and $I_d$ convergence coincides with the statistical convergence.

\begin{defn}\cite{4th4,h}
Let $(X, ||\cdot||)$ be a normed linear space and $I\subset 2^\mathbb{N}$ be a non-trivial ideal. A sequence $\{x_n\}_{n\in\mathbb{N}}$ in $X$ is said to be $I^*$-convergent to $x$ if there exists a set $M=\{m_1<m_2<\cdots<m_k<\cdots\}$ in $F(I)$ such that the sub sequence $\{x_{m_k}\}_{k\in \mathbb{N}}$ is convergent to $x$ i.e., $\displaystyle{\lim_{k\rightarrow \infty}} ||x - x_{m_k}|| =0$.
\end{defn}
It is seen in \cite{h} that $I^*$-convergence implies $I$-convergence. If an admissible ideal $I$ has the property (AP), then for a sequence $\{x_n\}_{n\in\mathbb{N}}$ in a normed linear space $X$, $I$-convergence implies $I^*$-convergence.
\begin{defn}\label{correction}\cite{R2}
Let $\{x_n\}_{n\in\mathbb{N}}$ be a sequence in a normed linear space $(X, ||\cdot||)$ and $r$ be a non-negative real number. Then $\{x_n\}_{n\in\mathbb{N}}$ is said to be rough convergent of roughness degree $r$ to $x$ or simply $r$-convergent to $x$, denoted by $x_n \xrightarrow{r} x$, if for all $\varepsilon >0$ there exists $N(\varepsilon)\in\mathbb{N}$ such that $n\geq N(\varepsilon)$ implies $|| x_n - x||< r +\varepsilon$ and $x$ is called rough limit of $\{x_n\}_{n\in\mathbb{N}}$ of roughness degree $r$.
\end{defn}
For $r=0$, the definition \ref{correction} reduces to definition of usual convergence of sequences. Here $x$ is called the $r$-limit point of $\{x_n\}_{n\in\mathbb{N}}$, which is usually no more unique (for $r>0$). So we have to consider the so called $r$-limit set (or shortly $r$-limit) of $\{x_n\}_{n\in\mathbb{N}}$ defined by $LIM^r x_n :=\{ x\in X : x_n \xrightarrow{r} x\}$. A sequence $\{x_n\}_{n\in\mathbb{N}}$ is said to be $r$-convergent if $LIM^r x_n \neq \phi$. In this case, $r$ is called a rough convergence degree of $\{x_n\}_{n\in\mathbb{N}}$.
\begin{defn}\cite{R7}
A sequence $\{x_n\}_{n\in\mathbb{N}}$ in a normed linear space $(X, ||{\cdot}||)$ is said to be rough $I$-convergent to $x$, denoted by $x_n \xrightarrow{r-I} x$ provided that $\{n\in\mathbb{N}: ||x_n - x||\geq r +\varepsilon\}\in I$ for every $\varepsilon >0$.
\end{defn}
Here $r$ is called the roughness degree. If we take $r=0$, then the definition of rough $I$-convergence reduces to $I$-convergence. In general rough $I$-limit of a sequence $\{x_n\}$ may not be unique for the roughness degree $r >0$. So we have to consider the so called rough $I$-limit set of a sequence $\{x_n\}_{n\in\mathbb{N}}$ which is defined by $I-LIM^ r x_n :=\{x : x_n \xrightarrow{r-I} x\}$. A sequence $\{x_n\}_{n\in\mathbb{N}}$ is said to be rough $I$-convergent if $I-LIM^ r X_n\neq \phi$.
\begin{defn}\cite{5thpaper}
A sequence $\{x_n\}_{n\in \mathbb{N}}$ in a normed linear space $(X, || \cdot || )$ is said to be rough $I^*$-convergent of roughness degree $r$ to $x$ if there exists a set $M = \{m_1 < m_2< m_3 < \cdots < m_k < \cdots\}\in F(I)$ such that the sub sequence $\{x_{m_k}\}_{k\in \mathbb{N}}$ is rough convergent of roughness degree $r$ to $x$ i.e., for any $\varepsilon >0$ there exists a $N\in \mathbb{N}$ such that $|| x_{m_k} - x || < r + \varepsilon $ for all $k \geq N$ and we write $x_n \xrightarrow{r-I^*} x$.
\end{defn}
Here $x$ is called the rough $I^*$-limit of the sequence $\{x_n\}_{n\in \mathbb{N}}$ of roughness degree $r$. For $r=0$, we have the definition of ordinary $I^*$-convergence of sequences in normed linear spaces. Obviously rough $I^*$-limit of a sequence in normed linear spaces is not unique. Therefore we have to consider the rough $I^*$-limit set of the sequence $\{x_n\}_{n\in\mathbb{N}}$ defined as follows: $I^*-LIM^r x_n = \{x\in X :x_n \xrightarrow{r - I^*} x\}$. 
\begin{defn}\label{def1}\cite{p5/1}
A function $f: \mathbb{R}\mapsto \mathbb{R}$ is said to be $I$-continuous at a point $x_0\in \mathbb{R}$, if $I-\displaystyle{\lim_{n\to\infty}} x_n = x_0\implies I-\displaystyle{\lim_{n\to\infty}} f(x_n)= f(x_0)$.
\end{defn}
If $f$ is $I$-continuous at each point of a set $M\subset \mathbb{R}$, then $f$ is called $I$-continuous on the set $M$.\\
Also it should be noted in \cite{p5/1} that $I$-continuity coincides with ordinary continuity for every admissible ideal $I$. Also if $f$ and $g$ are $I$-continuous at $x_0$, then $f + g$ and $fg$ are $I$-continuous at $x_0$.

\begin{defn}\label{def2}\cite{p5/1}
Let $I_1$ and $I_2$ be two admissible ideals. A function $f: [a, b]\mapsto \mathbb{R}$ is said to be $(I_1, I_2)$-continuous at $x_0(\in [a, b])$ if $I_1-\displaystyle{\lim_{n\to\infty}} x_n = x_0\implies I_2-\displaystyle{\lim_{n\to\infty}} f(x_n)= f(x_0)$ for every sequence $\{x_n\}$.
\end{defn}
A function $f$ is said to be $(I_1, I_2)$-continuous on $[a, b]$ if it is $(I_1, I_2)$-continuous at each $x\in [a, b]$.

We now give the Definition of $I^*$-continuity from \cite{p5/1} as follows:
\begin{defn}\cite{p5/1}
A function $f$ is said to be $I^*$-continuous at $x_0$ if $I^*-\displaystyle{\lim_{n\to\infty}} x_n= x_0\implies I^*-\displaystyle{\lim_{n\to\infty}} f(x_n)=f(x_0)$ for every sequence $\{x_n\}$.
\end{defn}
The relationship between continuity and $I^*$-continuity of real valued function $f$ is as follows:
\begin{thm}\cite{p5/1}
If the ideal $I$ has the property $(AP)$. Then $f$ is $I^*$-continuous at $x_0$ if and only if $f$ is continuous at $x_0$.
\end{thm}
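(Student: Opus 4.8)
The plan is to prove the two implications separately, establishing the easy direction directly from the definitions and routing the harder direction through the already-known coincidence of $I$-continuity and ordinary continuity.

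First, for the implication continuity $\Rightarrow$ $I^*$-continuity, I would argue directly and observe that property $(AP)$ is not even needed here. Suppose $f$ is continuous at $x_0$ and let $\{x_n\}$ be any sequence with $I^*\text{-}\lim_{n\to\infty} x_n = x_0$. By the definition of $I^*$-convergence there is a set $M = \{m_1 < m_2 < \cdots\}\in F(I)$ such that the subsequence $\{x_{m_k}\}_{k\in\mathbb{N}}$ converges to $x_0$ in the ordinary sense. Ordinary continuity of $f$ at $x_0$ then yields $\lim_{k\to\infty} f(x_{m_k}) = f(x_0)$, and since the very same index set $M\in F(I)$ witnesses this convergence, we obtain $I^*\text{-}\lim_{n\to\infty} f(x_n) = f(x_0)$. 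Hence $f$ is $I^*$-continuous at $x_0$.

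For the reverse implication $I^*$-continuity $\Rightarrow$ continuity, I would exploit the recalled facts that $I^*$-convergence always implies $I$-convergence and that, under $(AP)$, $I$-convergence implies $I^*$-convergence; together these make the two notions of convergence coincide. It suffices to show that $f$ is $I$-continuous at $x_0$, because $I$-continuity coincides with ordinary continuity for every admissible ideal. So take a sequence $\{x_n\}$ with $I\text{-}\lim_{n\to\infty} x_n = x_0$. Property $(AP)$ upgrades this to $I^*\text{-}\lim_{n\to\infty} x_n = x_0$; applying the hypothesis of $I^*$-continuity gives $I^*\text{-}\lim_{n\to\infty} f(x_n) = f(x_0)$; and finally the general implication $I^*\Rightarrow I$ gives $I\text{-}\lim_{n\to\infty} f(x_n) = f(x_0)$. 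Thus $f$ is $I$-continuous, hence continuous, at $x_0$.

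I expect the main obstacle to be the placement of the hypothesis $(AP)$: it is needed precisely once, to pass from $I$-convergence of the inputs $\{x_n\}$ to $I^*$-convergence so that the $I^*$-continuity hypothesis becomes applicable. A tempting shortcut---feeding an ordinarily convergent sequence $x_n\to x_0$ into $I^*$-continuity via the index set $M=\mathbb{N}$---fails to deliver ordinary continuity, since the resulting conclusion $I^*\text{-}\lim_{n\to\infty} f(x_n) = f(x_0)$ only controls $f(x_n)$ along \emph{some} subsequence indexed by a set in $F(I)$, not along the whole sequence. Routing through $I$-continuity circumvents this difficulty, because $I$-convergence of the outputs is all that is required there and is guaranteed automatically by $I^*\Rightarrow I$.
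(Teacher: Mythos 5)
Your proof is correct, but there is nothing in the paper to compare it against: this theorem appears only in the preliminaries as a recalled result, cited to Bal\'a\u{z} et al.\ \cite{p5/1}, and the paper gives no proof of it. Judged on its own, your argument is a valid derivation from exactly the facts the paper recalls. The forward direction (continuity implies $I^*$-continuity) follows from the definition of $I^*$-convergence alone, and you are right that $(AP)$ is not needed there: the witnessing set $M\in F(I)$ for the inputs also witnesses $I^*$-convergence of the outputs. The reverse direction correctly chains $(AP)$ (to upgrade $I$-convergence of the inputs to $I^*$-convergence), the hypothesis of $I^*$-continuity, the unconditional implication from $I^*$-convergence to $I$-convergence, and finally the coincidence of $I$-continuity with ordinary continuity for admissible ideals. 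The only dependency worth flagging is that this last coincidence is itself a nontrivial theorem of \cite{p5/1} that you invoke as a black box; since the paper explicitly recalls it, this is legitimate, and your remark about the tempting but fallacious shortcut (feeding an ordinarily convergent sequence into $I^*$-continuity with $M=\mathbb{N}$) correctly identifies why the detour through $I$-continuity is necessary.
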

\section{Rough Continuity}

\begin{defn}\label{2}
Let $D\subset \mathbb{R}$. A function $f: D(\subset \mathbb{R})\mapsto \mathbb{R}$ is said to be rough continuous at a point $x\in D$ of roughness degree $r_x$, if for every sequences $\{x_n\}\subset D$ converging to $x$, there exists a non-negative real number $r_x$ such that the sequence $\{f(x_n)\}$ is rough convergent of roughness degree $r_x$ to $f(x)$. If $\rho=\sup\{r_x: x\in D\}$ exists finitely then $f$ is called rough continuous on $D$ of roughness degree $\rho$. 
\end{defn}

\begin{rem}
If a function is continuous at a point $x$ then obviously it is rough continuous at the point $x$ of roughness degree zero  i.e., $r_x=0$, but the converse may not be true which can be seen from the next two examples. Thus if we denote the set of all real valued continuous function by $C(f)$ and the set of  real valued rough continuous function of some roughness degree $R$ by $RC(f)$ then we have $C(f) \subset RC(f)$.
\end{rem}
\begin{exmp}
Let us consider the function defined by $f(x)=\begin{cases}
1, \:\: \text{if}\: x \:\text{is rational}\\
0,\:\: \text{if}\: x \:\text{is irrational}
\end{cases}$. Then it is easy to see that $f$ is rough continuous through out $\mathbb{R}$ of roughness degree $\rho=1$. For, suppose $x=a$ be arbitrary and let $\{x_n\}$ be a sequence converging to $a$. Then for any $\varepsilon>0$, we have either $|f(x_n)-f(a)|<1+\varepsilon$ or $|f(x_n)-f(a)|<0+\varepsilon$ for all $n\in\mathbb{N}$. Since this is true for any sequence $\{x_n\}$ converging to $a$, thus the function considered above is rough continuous at $a$ of roughness degree $r_a=1$. Again as the point $a$ is  chosen arbitrarily therefore the function $f$ is rough continuous throughout $\mathbb{R}$ of roughness degree  $\rho=1$ .
\end{exmp}
\begin{exmp}\label{example}
Let us consider the function $f: \mathbb{R}\mapsto \mathbb{R}$ such that $f(x)= [x]$. Obviously this function is not continuous for all integer values of $x\in \mathbb{R}$. But this function is rough continuous throughout $\mathbb{R}$ including all integers. For, let $\{x_n\}$ be a sequence which is convergent to an integer $x_0$. Then for arbitrary $\varepsilon >0$ there is a $N\in\mathbb{N}$ such that either $|f(x_n) - f(x_0)|< 1 + \varepsilon $ or $|f(x_n) - f(x_0)|< 0 + \varepsilon$  for all $n\geq N$. Hence $f$ is rough continuous at $x_0$ of roughness degree $r_{x_0}=1$. Also since the integer $x_0$ is chosen arbitrarily, it follows that the function considered here is rough continuous of roughness degree $\rho=1$ throughout $\mathbb{R}$ although it is not continuous at all integer points.
\end{exmp}
\begin{rem}\label{fg}
From the above Example \ref{example}, it is clear that if we consider the function $f^2(x)=f(x).f(x)=[x].[x]=[x]^2$, then it is easy to see that $f^2$ is rough continuous at the integer point $0$ and $\pm 1$ of roughness degree $1$ and the roughness degree becomes lager than $1$ at any integer points other than $0$ and $\pm 1$. Hence $f^2$ is rough continuous at every integer points other than $0$ and $\pm 1$ of roughness degree grater than $1$ although $f(x)=[x]$ is rough continuous at every integer points of roughness degree $1$. Hence the roughness degree of the function $f^2(x)=[x]^2$ is not the square of the roughness degree of the function $f$.
\end{rem}

\begin{thm}\label{th2}
A real valued bounded function is always rough continuous for some roughness degree.
\begin{proof}
Let $ D\subset \mathbb{R}$ and $f:D\mapsto \mathbb{R}$ be a bounded function. Let $\{x_n\}$ be a sequence in $D(\subset\mathbb{R})$ such that it is convergent to $x_0\in D$. Again since the function $f$ is bounded, so $\{f(x_n)\}$ is a bounded sequence in $\mathbb{R}$. Let $M$ be an upper bound of $f$.  So for any sequence $\{x_n\}$ converging to $x_0$, $|f(x_n) - f(x_0)|\leq |f(x_n)| +|f(x_0)|\leq 2M< 2M +\varepsilon$ for $\varepsilon>0$ and for all $n\in\mathbb{N}$. So it follows that $f$ is rough continuous at $x_0$.
\end{proof}
\end{thm}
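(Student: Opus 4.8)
The plan is to unwind the definition of rough continuity (Definition \ref{2}) and exploit the global bound on $f$ to manufacture a single roughness degree that serves at every point and for every approaching sequence at once. So let $D \subset \mathbb{R}$ and let $f : D \to \mathbb{R}$ be bounded, say $|f(t)| \leq M$ for all $t \in D$, and fix an arbitrary point $x_0 \in D$.

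First I would take any sequence $\{x_n\} \subset D$ with $x_n \to x_0$ and estimate the deviation of $f(x_n)$ from $f(x_0)$. By the triangle inequality together with the bound on $f$, one gets $|f(x_n) - f(x_0)| \leq |f(x_n)| + |f(x_0)| \leq 2M$ for every $n$. Consequently, for any $\varepsilon > 0$ the inequality $|f(x_n) - f(x_0)| < 2M + \varepsilon$ holds for all $n$, so in Definition \ref{correction} one may even take $N(\varepsilon) = 1$. This is exactly the assertion that $\{f(x_n)\}$ is rough convergent of roughness degree $2M$ to $f(x_0)$.

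Since $\{x_n\}$ was an arbitrary sequence in $D$ converging to $x_0$, the number $r_{x_0} = 2M$ witnesses rough continuity of $f$ at $x_0$ in the sense of Definition \ref{2}; and as $x_0$ was arbitrary, $f$ is rough continuous at every point of $D$. To upgrade this to rough continuity on $D$ as a set, I would note that the bound $2M$ is independent of $x_0$, so each $r_x$ may be taken equal to $2M$, whence $\rho = \sup\{r_x : x \in D\}$ exists and is finite, with $\rho \leq 2M$. I do not expect a genuine obstacle here: boundedness forces the deviation to be uniformly controlled, and the only point deserving a word of care is the finiteness of $\rho$, which is immediate once the uniform bound $2M$ is in hand.
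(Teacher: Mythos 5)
Your proposal is correct and follows essentially the same argument as the paper: bound $|f(x_n)-f(x_0)|\leq |f(x_n)|+|f(x_0)|\leq 2M$ by the triangle inequality and conclude rough convergence of roughness degree $2M$ at every point. The only (welcome) addition is your explicit remark that the bound $2M$ is uniform in $x_0$, so $\rho=\sup\{r_x : x\in D\}$ is finite, a point the paper leaves implicit.
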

Converse of the Theorem \ref{th2} is also true if we take $D$ as a closed and bounded interval of $\mathbb{R}$. Thus we have the following theorem.
\begin{thm}
If a function $f: D(\subset \mathbb{R})\mapsto \mathbb{R}$ is rough continuous of roughness degree $\rho$ and $D$ is a closed and bounded interval, then the function $f$ is bounded.
\end{thm}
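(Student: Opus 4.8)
The plan is to argue by contradiction, exploiting the compactness of the closed bounded interval $D=[a,b]$ together with the sequential formulation of rough continuity. First I would suppose, to the contrary, that $f$ is \emph{not} bounded on $[a,b]$. Then for each $n\in\mathbb{N}$ there is a point $y_n\in[a,b]$ with $|f(y_n)|>n$, so that $|f(y_n)|\to\infty$ as $n\to\infty$.

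Next I would apply the Bolzano--Weierstrass theorem to the bounded sequence $\{y_n\}\subset[a,b]$ to extract a subsequence $\{y_{n_k}\}$ converging to some point $x_0$. Here the closedness of $D=[a,b]$ is essential: it guarantees that the limit $x_0$ actually lies in $D$, so that the rough continuity hypothesis is available at $x_0$. Since $\{y_{n_k}\}$ is a subsequence of $\{y_n\}$, we still have $|f(y_{n_k})|\to\infty$.

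Now I would invoke the hypothesis. Because $f$ is rough continuous at $x_0$ of roughness degree $r_{x_0}\leq\rho<\infty$ and $y_{n_k}\to x_0$, the sequence $\{f(y_{n_k})\}$ is rough convergent of degree $r_{x_0}$ to $f(x_0)$. Fixing any $\varepsilon>0$ and applying the definition of rough convergence, there is $N\in\mathbb{N}$ such that $|f(y_{n_k})-f(x_0)|<r_{x_0}+\varepsilon$ for all $k\geq N$, whence $|f(y_{n_k})|<r_{x_0}+\varepsilon+|f(x_0)|$ for all such $k$. This is a fixed finite bound, contradicting $|f(y_{n_k})|\to\infty$. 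The contradiction forces $f$ to be bounded.

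The conceptual core is short; the only point requiring care is the transition at the second step, where I must ensure both that the limit $x_0$ of the extracted subsequence belongs to $D$ (supplied precisely by the closedness assumption) and that passing to a subsequence preserves the divergence $|f(y_{n_k})|\to\infty$. Everything else is a direct application of the definition of rough convergence with a single fixed $\varepsilon$, using only that $r_{x_0}$ is finite (indeed $r_{x_0}\le\rho$). I expect no serious obstacle beyond this bookkeeping, and in particular the finiteness of $\rho$ is exactly what prevents the rough bound from being vacuous.
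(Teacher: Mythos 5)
Your proposal is correct and follows essentially the same route as the paper's own proof: contradiction via a sequence $\{y_n\}$ with $|f(y_n)|>n$, Bolzano--Weierstrass plus closedness of $D$ to get a subsequence converging to a point $x_0\in D$, and then the rough convergence bound $|f(y_{n_k})|<r_{x_0}+\varepsilon+|f(x_0)|$ to contradict unboundedness. The only difference is cosmetic: you conclude by noting the fixed bound contradicts $|f(y_{n_k})|\to\infty$, while the paper reaches the same contradiction by explicitly choosing indices $n_r>\rho+\varepsilon+M$ and $p=\max\{r,N\}$; your phrasing is cleaner but the argument is identical.
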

\begin{proof}
Suppose that the function $f: D(\subset)\mathbb{R}\mapsto\mathbb{R}$ is rough continuous of roughness degree $\rho$. Let $D=[a, b]$. If possible let the function $f$ is not bounded. Hence for every $n\in\mathbb{N}$ we have a $x_n\in D$ such that $|f(x_n)|> n$. Now as $D$ is bounded so is the sequence $\{x_n\}_{n\in\mathbb{N}}$. Hence by Bolzano-Weierstrass Theorem, this sequence has a convergent sub sequence $\{x_{n_k}\}_{k\in\mathbb{N}}$. Since $D$ is closed  so let the sub sequence $\{x_{n_k}\}_{k\in\mathbb{N}}$ converges to some $x\in D$. Since $a\leq x_{n_k}\leq b$, therefore taking limit as $k\to \infty$ we get $a \leq x\leq b$. As $x\in D$, so let $|f(x)|=M $. Since $f$ is rough continuous of roughness degree $\rho$ on $D$, therefore the sequence $\{f(x_{n_k})\}$ is rough convergent of roughness degree $\rho$ to $f(x)$. Thus for any $\varepsilon >0$ there exists a $N\in\mathbb{N}$ such that $|f(x_{n_k}) - f(x)|< \rho +\varepsilon $ for all $k\geq N$. Therefore $|f(x_{n_k})|=|f(x_{n_k}) - f(x) + f(x)|\leq \rho +\varepsilon + |f(x)|= \rho +\varepsilon +M$ for all $k\geq N$. Take a natural number $r$ such that $n_r > \rho +\varepsilon +M$, and let $p=\max\{r, N\}$. Then $|f(x_{n_k})|\leq \rho +\varepsilon+M< n_r\leq n_p$ for all $k\geq p$, in particular $|f(x_{n_p})|< \rho+\varepsilon +M<n_r\leq n_p$. But $|f(x_{n_p})|>n_p$. This leads to a contradiction. Therefore $f$ is bounded.
\end{proof}

\begin{prop}
If functions $f$ and $g$ are rough continuous of roughness degree $r_{1_{_{x_0}}}$ and $r_{2_{_{x_0}}}$ respectively at $x_0$ then the function $f+g$ is also rough continuous at $x_0$ of roughness degree $r_{1_{_{x_0}}} + r_{2_{_{x_0}}}$.
\end{prop}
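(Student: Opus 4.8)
The plan is to reduce the whole statement to the definition of rough convergence applied term by term, with the triangle inequality doing the essential work. First I would fix an arbitrary sequence $\{x_n\}\subset D$ converging to $x_0$; by Definition \ref{2} it suffices to show that $\{(f+g)(x_n)\}$ is rough convergent of roughness degree $r_{1_{x_0}}+r_{2_{x_0}}$ to $(f+g)(x_0)=f(x_0)+g(x_0)$. Since $f$ is rough continuous at $x_0$ of roughness degree $r_{1_{x_0}}$, the hypothesis applied to this very sequence tells us that $\{f(x_n)\}$ is rough convergent of roughness degree $r_{1_{x_0}}$ to $f(x_0)$, and likewise $\{g(x_n)\}$ is rough convergent of roughness degree $r_{2_{x_0}}$ to $g(x_0)$.

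Next I would unwind these two rough convergences for a prescribed accuracy. Given $\varepsilon>0$, I apply the definition of rough convergence to $\{f(x_n)\}$ with the tolerance $\varepsilon/2$, obtaining an index $N_1\in\mathbb{N}$ with $|f(x_n)-f(x_0)|<r_{1_{x_0}}+\varepsilon/2$ for all $n\geq N_1$, and similarly an index $N_2\in\mathbb{N}$ with $|g(x_n)-g(x_0)|<r_{2_{x_0}}+\varepsilon/2$ for all $n\geq N_2$. Setting $N=\max\{N_1,N_2\}$ and using the triangle inequality, for every $n\geq N$ one gets
\[
|(f+g)(x_n)-(f+g)(x_0)|\leq |f(x_n)-f(x_0)|+|g(x_n)-g(x_0)| < (r_{1_{x_0}}+r_{2_{x_0}})+\varepsilon.
\]
As $\varepsilon>0$ was arbitrary, this is exactly the statement that $\{(f+g)(x_n)\}$ is rough convergent of roughness degree $r_{1_{x_0}}+r_{2_{x_0}}$ to $(f+g)(x_0)$. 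Finally, since the sequence $\{x_n\}$ converging to $x_0$ was chosen arbitrarily, $f+g$ is rough continuous at $x_0$ of roughness degree $r_{1_{x_0}}+r_{2_{x_0}}$, completing the proof.

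I do not anticipate a genuine obstacle here: the only points requiring care are the standard bookkeeping of splitting $\varepsilon$ into $\varepsilon/2+\varepsilon/2$ and passing to the common index $N=\max\{N_1,N_2\}$. It is worth emphasizing that the conclusion gives $r_{1_{x_0}}+r_{2_{x_0}}$ only as an upper bound for the roughness degree of $f+g$, not necessarily the exact (least) one; this is fully consistent with the multiplicative phenomenon already noted in Remark \ref{fg}, where the roughness degree of $f^2$ fails to be the square of that of $f$. For this reason I would state the result, as the author does, in terms of $f+g$ being rough continuous \emph{of} roughness degree $r_{1_{x_0}}+r_{2_{x_0}}$, understanding this sum as a permissible degree rather than a sharp value.
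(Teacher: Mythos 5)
Your proposal is correct and follows essentially the same route as the paper's own proof: fix a sequence converging to $x_0$, apply rough continuity of $f$ and $g$ with tolerance $\varepsilon/2$ each, take $N=\max\{N_1,N_2\}$, and conclude via the triangle inequality. The closing remark that $r_{1_{x_0}}+r_{2_{x_0}}$ is a permissible rather than sharp roughness degree is a sensible clarification, consistent with the paper's Remark on $f^2$.
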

\begin{proof}
Let $f$ and $g$ be functions rough continuous at $x_0$ of roughness degree $r_{1_{_{x_0}}}$ and $r_{2_{_{x_0}}}$ respectively. Let $\varepsilon>0$ be arbitrary and $\{x_n\}$ be a sequence converging to $x_0$. Then according to our assumption and by the definition of rough continuity we have $f(x_0)\in LIM^ {r_{1_{_{x_0}}}} f(x_n)$ and $g(x_0)\in LIM^ {r_{2_{_{x_0}}}} g(x_n)$. Thus there exists $N_1, N_2\in\mathbb{N}$ such that $|f(x_n) - f(x_0)|< r_{1_{_{x_0}}} +\frac{\varepsilon}{2}$ and $|g(x_n) - g(x_0)|< r_{2_{_{x_0}}}+\frac{\varepsilon}{2}$ for all $n\geq N_1$ and $n\geq N_2$ respectively. Let $N=\max\{N_1, N_2\}$. Now $|(f +g)(x_n) -(f+g)(x_0)|=|f(x_n) - f(x_0) + g(x_n) - g(x_0)|\leq |f(x_n) - f(x_0)| + |g(x_n) - g(x_0)|< r_{1_{_{x_0}}} +\frac{\varepsilon}{2} + r_{2_{_{x_0}}} +\frac{\varepsilon}{2}=r_{1_{_{x_0}}} + r_{2_{_{x_0}}} +\varepsilon$ for all $n\geq N$. Since $\varepsilon>0$ is arbitrary, therefore it follows that $f+g$ is rough continuous at $x_0$ of roughness degree $r_{1_{_{x_0}}} + r_{2_{_{x_0}}}$. 
\end{proof}
\begin{thm}
If a function $f: D(\subset \mathbb{R})\mapsto \mathbb{R}$ is rough continuous at an arbitrary point $x_0\in D$ of roughness degree $r_{x_0}$, then the function $(cf)(x)= c f(x)$, where $c$ is a  non zero real number is rough continuous at point $x_0$ of roughness degree $|c|r_{x_0}$.
\end{thm}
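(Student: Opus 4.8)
The plan is to reduce everything to the $\varepsilon$-definition of rough convergence and to exploit the homogeneity of the absolute value under scalar multiplication. First I would fix an arbitrary sequence $\{x_n\}\subset D$ converging to $x_0$; by the hypothesis that $f$ is rough continuous at $x_0$ of roughness degree $r_{x_0}$, the sequence $\{f(x_n)\}$ is rough convergent of degree $r_{x_0}$ to $f(x_0)$, that is, $f(x_0)\in LIM^{r_{x_0}} f(x_n)$. Unpacking Definition \ref{correction}, this means that for every $\delta>0$ there exists an $N(\delta)\in\mathbb{N}$ with $|f(x_n)-f(x_0)|<r_{x_0}+\delta$ for all $n\geq N(\delta)$.

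To reach the target statement I would then let $\varepsilon>0$ be arbitrary and, using $c\neq 0$, apply the preceding estimate with the choice $\delta=\varepsilon/|c|$. This produces an $N=N(\varepsilon/|c|)$ such that $|f(x_n)-f(x_0)|<r_{x_0}+\varepsilon/|c|$ for all $n\geq N$. Multiplying through by $|c|$ and using the multiplicativity of the modulus gives $|(cf)(x_n)-(cf)(x_0)|=|c|\,|f(x_n)-f(x_0)|<|c|\,r_{x_0}+\varepsilon$ for all $n\geq N$, which is precisely the inequality required to conclude that $(cf)(x_0)\in LIM^{|c|\,r_{x_0}} (cf)(x_n)$, i.e.\ that $\{(cf)(x_n)\}$ is rough convergent of degree $|c|\,r_{x_0}$ to $(cf)(x_0)$.

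Since $\varepsilon>0$ was arbitrary and the sequence $\{x_n\}$ converging to $x_0$ was arbitrary, the defining condition for rough continuity is verified, and hence $cf$ is rough continuous at $x_0$ of roughness degree $|c|\,r_{x_0}$. I do not anticipate any serious obstacle here, as the argument is essentially a rescaling of the one used for $f+g$ in the preceding Proposition. The only point requiring care is the division by $|c|$ in the choice $\delta=\varepsilon/|c|$, which is legitimate precisely because $c$ is assumed nonzero; this is the sole place where the hypothesis $c\neq 0$ is used. For the degenerate case $c=0$ the statement would be vacuous in spirit, since $cf\equiv 0$ is continuous and the claimed degree $|c|\,r_{x_0}=0$ is consistent with that.
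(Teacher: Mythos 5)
Your proposal is correct and follows essentially the same argument as the paper's own proof: fix a sequence $\{x_n\}$ converging to $x_0$, invoke rough convergence of $\{f(x_n)\}$ with the threshold $\varepsilon/|c|$ (legitimate since $c\neq 0$), and multiply the resulting inequality by $|c|$ to obtain $|cf(x_n)-cf(x_0)|<|c|r_{x_0}+\varepsilon$ for all large $n$. There is nothing to add; the two proofs coincide.
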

\begin{proof}
Let a function $f: D(\subset \mathbb{R})\mapsto \mathbb{R}$ be rough continuous at an arbitrary point $x_0\in D$ of roughness degree $r_{x_0}$. Suppose $\{x_n\}$ be a sequence converging to $x_0$ and $c$ be a non zero real number. Then for an arbitrary $\varepsilon>0$ there exists a $N\in\mathbb{N}$ such that for the sequence $\{f(x_n)\}$ we have $|f(x_0) - f(x_n)|<r_{x_0} +\frac{\varepsilon}{|c|} $ for all $n\geq N$. Now $|cf(x_0) - cf(x_n)|=|c||f(x_0) - f(x_n)|< |c| (r_{x_0} +\frac{\varepsilon}{|c|})=|c|r_{x_0} +\varepsilon$ for all $n\geq N$. Since $\varepsilon  >0$ is arbitrary, therefore $cf$ is rough continuous at $x_0$ of roughness degree $|c|r_{x_0}$.
\end{proof}

\begin{cor}
If $f: D(\subset \mathbb{R})\mapsto \mathbb{R}$ is rough continuous at $x_0\in D$ of roughness degree $r_{x_0}$, then the function $\frac{1}{f}$ is rough continuous at $x_0$, provided that $f(x_0)$ is non zero.
\end{cor}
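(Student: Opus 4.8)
The plan is to reduce the statement to a direct $\varepsilon$-estimate along sequences, using the algebraic identity
\[
\left|\frac{1}{f(x_n)} - \frac{1}{f(x_0)}\right| = \frac{|f(x_n) - f(x_0)|}{|f(x_n)|\,|f(x_0)|},
\]
where the numerator is controlled by the rough continuity of $f$ at $x_0$ and the denominator by a lower bound on $|f(x_n)|$. So I would take an arbitrary sequence $\{x_n\}$ converging to $x_0$, fix $\varepsilon > 0$, and introduce a small auxiliary parameter $\delta > 0$ to be pinned down at the end.

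First I would invoke the hypothesis: since $f$ is rough continuous at $x_0$ of degree $r_{x_0}$, there is $N \in \mathbb{N}$ with $|f(x_n) - f(x_0)| < r_{x_0} + \delta$ for all $n \geq N$. The reverse triangle inequality then gives $|f(x_n)| > |f(x_0)| - r_{x_0} - \delta$ for $n \geq N$. Substituting the numerator bound and this denominator bound into the identity yields, for all $n \geq N$,
\[
\left|\frac{1}{f(x_n)} - \frac{1}{f(x_0)}\right| < \frac{r_{x_0} + \delta}{\left(|f(x_0)| - r_{x_0} - \delta\right)|f(x_0)|}.
\]
As a function of $\delta$ the right-hand side is continuous and decreases to $\frac{r_{x_0}}{(|f(x_0)| - r_{x_0})|f(x_0)|}$ as $\delta \to 0^{+}$, so for $\delta$ chosen small enough it stays below $\frac{r_{x_0}}{(|f(x_0)| - r_{x_0})|f(x_0)|} + \varepsilon$. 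Because $\{x_n\}$ and $\varepsilon$ were arbitrary, this exhibits $\tfrac{1}{f}$ as rough continuous at $x_0$ of roughness degree $\rho_{x_0} = \frac{r_{x_0}}{|f(x_0)|\,(|f(x_0)| - r_{x_0})}$.

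The step I expect to be the genuine obstacle is the lower bound on $|f(x_n)|$: both the estimate above and the very well-definedness of $\tfrac{1}{f}$ along the sequence require $f(x_n)$ to be bounded away from $0$ for large $n$, which the argument delivers only when $|f(x_0)| > r_{x_0}$ (so that $|f(x_0)| - r_{x_0} - \delta > 0$ for small $\delta$). Without this strict inequality the conclusion genuinely fails: if $f(x_0) \neq 0$ but $r_{x_0} \geq |f(x_0)|$, one can have a sequence $x_n \to x_0$ with $f(x_n) \to 0$ while still $|f(x_n) - f(x_0)| < r_{x_0} + \varepsilon$ eventually, forcing $\frac{1}{f(x_n)} \to \infty$ and precluding rough convergence of any finite degree. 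I would therefore carry out the proof under the reading $|f(x_0)| > r_{x_0}$, which is exactly the condition making $\tfrac{1}{f}$ meaningful near $x_0$, and flag that the bare hypothesis $f(x_0) \neq 0$ is by itself insufficient for the stated conclusion.
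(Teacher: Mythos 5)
Your proposal is correct, and at the decisive step it differs from --- and in fact repairs --- the paper's own proof. Both arguments start from the same identity
\[
\left|\frac{1}{f(x_n)}-\frac{1}{f(x_0)}\right|=\frac{|f(x_n)-f(x_0)|}{|f(x_n)|\,|f(x_0)|},
\]
but the paper controls the denominator with an \emph{upper} bound: it observes that a rough convergent sequence is bounded, fixes $|f(x_n)|\leq M$, and then asserts $\frac{|f(x_n)-f(x_0)|}{|f(x_0)f(x_n)|}<\frac{r_{x_0}+\varepsilon}{|LM|}$ with $L=f(x_0)$. That inequality goes the wrong way: $|f(x_n)|\leq M$ gives $\frac{1}{|f(x_n)|}\geq\frac{1}{M}$, so an upper bound on $|f(x_n)|$ bounds the quotient from \emph{below}, not above, and nothing in the paper's hypotheses keeps $f(x_n)$ away from $0$. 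You identified exactly this as the crux, produced the needed \emph{lower} bound $|f(x_n)|>|f(x_0)|-r_{x_0}-\delta$ via the reverse triangle inequality, and isolated the hypothesis $|f(x_0)|>r_{x_0}$ under which that bound is positive; your limiting argument in $\delta$ then cleanly yields the explicit roughness degree $\frac{r_{x_0}}{|f(x_0)|\,(|f(x_0)|-r_{x_0})}$, where the paper can only say the degree ``depends on $f(x_0)$ and $M$.''

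Your closing caveat is also substantively right, not just cautious: the corollary as printed (with only $f(x_0)\neq 0$) is false, so no proof of it can be valid. Concretely, on $D=\mathbb{R}$ let $f(x)=x$ for $x\neq 0$ and $f(0)=1$. Then $f$ is nowhere zero, $f(0)=1\neq 0$, and $f$ is rough continuous at $x_0=0$ of roughness degree $r_0=1$ (for any $x_n\to 0$ one eventually has $|f(x_n)-f(0)|<1+\varepsilon$), yet along $x_n=\frac{1}{n}$ one gets $\frac{1}{f(x_n)}=n\to\infty$, so $\left\{\frac{1}{f(x_n)}\right\}$ is not rough convergent to $\frac{1}{f(0)}$ of any finite degree. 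Hence $|f(x_0)|>r_{x_0}$ is not a cosmetic strengthening but exactly the condition the statement needs, and your proof under that reading is complete.
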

\begin{proof}
If possible let $f: D(\subset \mathbb{R})\mapsto \mathbb{R}$ is rough continuous at $x_0\in D$ of roughness degree $r_{x_0}$ and $\{x_n\}$ be a sequence converging to $x_0\in D$. Also let $f(x_0)=L(\neq 0)$. Now according to our assumption and by the definition of rough continuity, the sequence $\{f(x_n)\}$ is rough convergent of roughness degree  $r_{{x_0}}$ to $f(x)$. Also since the sequence $\{f(x_n)\}$ has a non empty rough limit set therefore $\{f(x_n)\}$ is bounded, so let $|f(x_n)|\leq M$ for all $n\in\mathbb{N}$. Let $\varepsilon >0$ be arbitrary. Then  by the definition of rough continuity of $f$ there exists $N\in\mathbb{N}$ such that $|f(x_n) - f(x_0)|<r_{x_0} +\varepsilon$ for all $n\geq N$. Now, $|\frac{1}{f(x_n)} -\frac{1}{f(x_0)}|=\frac{|f(x_n) - f(x_0)|}{|f(x_0)f(x_n)|}<\frac{r_{x_0} +\varepsilon}{|LM|}$ for all $n\geq N$. Hence it follows that $f$ is rough continuous at $x_0$.
\end{proof}
\begin{note}
Note that the roughness degree for rough continuity of $\frac{1}{f}$ depends on the value of $f$ at $x_0$ and on $M$.
\end{note}
Connectedness of a set in case of continuous image is preserved. But this is not true in case of rough continuity. We will justify our claim in the following remark.
\begin{rem}
Let us consider the function $f: [1,3]\mapsto \mathbb{R}$ defined by $f(x)=[x]$. As we have seen in the example \ref{example} that this function is rough continuous on $[1,3]$ of roughness degree $\rho=1$. But the image of the function is not an interval. Thus connectedness may not be preserved by the rough continuity unlike continuity.
\end{rem}
\begin{exmp}
 Let us consider the function $f$ on $D=[-1, 1]$by $f(x)=\begin{cases}
 0, x= -1\\
 2x, x\in (-1, 1]
 \end{cases}$. Now obviously $D$ is a compact subset of $\mathbb{R}$. Also it is easy to see that $f$ is rough continuous on $D$, but $f(D)= \{0\} \cup (-2,2]=(-2, 2]$. Therefore $f(D)$ is not closed set in $\mathbb{R}$ and hence it is not compact. Thus image of a compact set under rough continuous function is not necessarily a compact set.
\end{exmp}
A constant function on $D\subset \mathbb{R}$ is continuous and so rough continuous of roughness degree $\rho=0$. On other hand if a function on $D$ is rough continuous of roughness degree $\rho=0$ then it can not be constant. 
\begin{thm}
If the roughness degree $\rho$ of a function $f$ is non zero, then a constant function can not be rough continuous of roughness degree $\rho$.
\end{thm}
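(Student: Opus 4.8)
The plan is to establish the contrapositive: if $f$ is constant on $D$, then its roughness degree is forced to be $\rho = 0$, so a function with prescribed non-zero roughness degree $\rho$ cannot be constant. Concretely, I would suppose $f(x) = c$ for all $x \in D$, with $c$ a fixed real number, and argue directly from Definition \ref{2} that every point contributes $r_x = 0$ to the supremum defining $\rho$.

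First I would fix an arbitrary $x_0 \in D$ and let $\{x_n\}$ be any sequence in $D$ converging to $x_0$. Since $f$ is constant, $f(x_n) = c = f(x_0)$ for every $n$, so $|f(x_n) - f(x_0)| = 0$ for all $n$. Feeding this into the definition of rough convergence (Definition \ref{correction}), for each $\varepsilon > 0$ we get $|f(x_n) - f(x_0)| = 0 < 0 + \varepsilon$ for all $n$, whence $\{f(x_n)\}$ is rough convergent of roughness degree $0$ to $f(x_0)$. Because the sequence was arbitrary, $f$ is rough continuous at $x_0$ with $r_{x_0} = 0$; and because $x_0$ was arbitrary, $\rho = \sup\{r_x : x \in D\} = 0$, contradicting $\rho \neq 0$.

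The only delicate point --- which I regard as the conceptual crux rather than a computational one --- is the non-uniqueness of the roughness degree. A constant function is trivially rough convergent of every roughness degree $r \geq 0$, so the statement carries content only when $\rho$ is read as the exact (smallest) roughness degree, that is, when each $r_x$ is taken to be the infimum of the admissible roughness degrees at $x$. Under that reading the computation above shows this infimum equals $0$ at every point, which is exactly what pins $\rho$ to $0$ and rules out any constant function having a prescribed non-zero roughness degree $\rho$.
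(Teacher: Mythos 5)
Your proposal is correct, and in substance it is the same argument as the paper's: both rest on the observation that a constant function achieves rough convergence of degree $0$, so its exact roughness degree cannot be a prescribed non-zero $\rho$. You argue the contrapositive (constant $\Rightarrow \rho = 0$), while the paper argues directly: it assumes $\rho \neq 0$, picks a point $x_0$ with roughness degree $R_{x_0} > 0$, writes down the eventual inequality $|f(x_n) - f(x_0)| < R_{x_0} + \varepsilon$, and then asserts that because $R_{x_0} \neq 0$ this forces $f$ to be non-constant. As literally written, that final inference in the paper is a non sequitur: the inequality is only an upper bound, which a constant function satisfies trivially, so nothing about non-constancy follows from it alone. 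What rescues the argument is precisely the point you single out as the conceptual crux --- the roughness degree must be read as the exact (minimal) admissible degree, under which a constant function has degree $0$ at every point rather than $R_{x_0} > 0$. Your write-up makes this reading explicit and carries out the computation pinning every $r_x$ to $0$, so it actually fills the gap the paper's own proof leaves implicit; in that sense yours is the more careful version of the same argument.
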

\begin{proof}
Suppose that $f: D(\subset \mathbb{R})\mapsto \mathbb{R}$ be a function such that it is rough continuous of roughness degree $\rho$, where $\rho$ is non zero. Since $\rho$ is non zero therefore there exists a non zero $R_{x_0}(\leq \rho)$ such that $f$ is rough continuous at some point $x_0\in D$ of roughness degree $R_{x_0}$. Now as $f$ is rough continuous at point $x_0$ of roughness degree $R_{x_0}$, therefore for any sequence $\{x_n\}$ converging to $x_0$, the sequence $\{f(x_n)\}$ is rough convergent of roughness degree $R_{x_0}$ to $f(x_0)$. Thus for a $\varepsilon>0$ we have a $N\in\mathbb{N}$ such that $|f(x_n) - f(x_0)|< R_{x_0} + \varepsilon$ for all $n\geq N$ $\rightarrow (i)$. Now since $R_{x_0}$ is non zero, therefore it follows from $(i)$ that $f$ can not be constant.   
\end{proof}
\section{Rough $I$-continuity}
Throughout this section we consider the ideal of subsets of $\mathbb{N}$, the set of all natural numbers.
\begin{defn}
A function $f: D(\subset \mathbb{R})\mapsto\mathbb{R}$ is said to be rough $I$-continuous of roughness degree ${r_{I}}_x$ at a point $x\in D$ if for every sequences $\{x_n\}$ $I$-converging to $x$, there exists a non-negative real number ${r_{I}}_x$ such that the sequence $\{f(x_n)\}$ is rough $I$-convergent of roughness degree ${r_{I}}_x$ to $f(x)$. Now if $\rho_{I}=\sup\{{r_{I}}_x: x\in D\}$ exists finitely, we then call $f$ is rough $I$-continuous on $D$ of roughness degree $\rho_I$.

\end{defn}
 If the $I$-roughness degree ${r_{I}}_x$ of $f$ at a point $x\in D$ becomes zero, the above definition reduced to the definition of ordinary $I$-continuity of the function at $x$. Obviously, for an admissible ideal $I$, rough continuity of a function $f$ implies that it is also rough $I$-continuous.
\begin{thm}
Let $I$ be a non trivial admissible ideal on $\mathbb{N}$. If a function $f: D(\subset\mathbb{R})\mapsto\mathbb{R}$ is rough $I$-continuous at a point $x\in D$, then it is also rough continuous at $x$.
\end{thm}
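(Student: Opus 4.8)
The plan is to argue by contradiction, exploiting the single fact that a non-trivial admissible ideal cannot contain a cofinite set. First I would record the two ingredients handed to us by the hypotheses. Since $I$ is admissible, ordinary convergence implies $I$-convergence, so any sequence $\{x_n\}$ with $x_n\to x$ automatically $I$-converges to $x$; this lets every ordinary-convergence test sequence be fed into the rough $I$-continuity hypothesis. Second, since $I$ is non-trivial and admissible, no cofinite set lies in $I$: if $F\subset\mathbb{N}$ is finite and $\mathbb{N}\setminus F\in I$, then $\mathbb{N}=(\mathbb{N}\setminus F)\cup F\in I$, contradicting $\mathbb{N}\notin I$.

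Now fix an arbitrary sequence $\{x_n\}$ with $x_n\to x$ and suppose, for contradiction, that $\{f(x_n)\}$ fails to be rough convergent to $f(x)$ for every roughness degree. Since rough convergence of $\{f(x_n)\}$ to $f(x)$ of degree $r$ amounts to $\limsup_n|f(x_n)-f(x)|\le r$, this failure for all finite $r$ forces $\limsup_n|f(x_n)-f(x)|=\infty$, and I would extract a subsequence $\{x_{n_k}\}$ with $|f(x_{n_k})-f(x)|\to\infty$. As a subsequence of a convergent sequence, $\{x_{n_k}\}$ still converges to $x$, hence $I$-converges to $x$ by the first ingredient.

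Applying rough $I$-continuity to $\{x_{n_k}\}$ yields a non-negative real number $r$ for which $\{f(x_{n_k})\}$ is rough $I$-convergent of degree $r$ to $f(x)$, i.e.\ $\{k:|f(x_{n_k})-f(x)|\ge r+\varepsilon\}\in I$ for every $\varepsilon>0$. But $|f(x_{n_k})-f(x)|\to\infty$ means that for each fixed $\varepsilon>0$ all but finitely many $k$ satisfy $|f(x_{n_k})-f(x)|\ge r+\varepsilon$, so that index set is cofinite and cannot lie in $I$ by the second ingredient. This contradiction shows $\{f(x_n)\}$ is rough convergent to $f(x)$ of some finite degree, and since $\{x_n\}$ was arbitrary, $f$ is rough continuous at $x$.

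The step I expect to be the real obstacle is precisely the passage from the ideal-theoretic conclusion back to the metric one: rough $I$-convergence of a single image sequence does not by itself prevent that sequence from being unbounded, since its large values may be confined to an index set lying in $I$, so one cannot simply upgrade a single sequence in place. The contradiction argument sidesteps this by using the universal quantifier in the definition of rough $I$-continuity: feeding in the badly behaved subsequence makes the offending index set cofinite, which no non-trivial admissible ideal can absorb. If one reads Definition \ref{2} as demanding a single roughness degree valid for all test sequences, the same contradiction additionally shows $f$ is bounded on a neighbourhood of $x$, whence a uniform degree (twice that bound) works.
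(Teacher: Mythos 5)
Your proof is correct, and it shares the paper's overall skeleton: assume rough continuity fails at $x$, push the offending convergent sequence through the rough $I$-continuity hypothesis using admissibility, and contradict non-triviality of $I$. But your key step is genuinely different, and the difference matters. The paper asserts that if $\{f(x_n)\}$ is not rough convergent to $f(x)$ for any degree, then for some $\varepsilon>0$ the set $\{n\in\mathbb{N}:|f(x_n)-f(x)|\ge r_x+\varepsilon\}$ equals all of $\mathbb{N}$, and concludes directly from $\mathbb{N}\notin I$. That assertion is unjustified: failure of rough convergence only makes this set \emph{infinite}, and an infinite set can perfectly well belong to a non-trivial admissible ideal (for instance the density-zero ideal $I_d$), so the paper's contradiction does not follow as written. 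Your argument closes precisely this gap: from $\limsup_n|f(x_n)-f(x)|=\infty$ you extract a subsequence along which $|f(x_{n_k})-f(x)|\to\infty$, reapply the rough $I$-continuity hypothesis to that subsequence (legitimate, since it still converges, hence $I$-converges, to $x$), and for this new test sequence the offending index set is cofinite for every $\varepsilon>0$ --- and a non-trivial admissible ideal can never contain a cofinite set, since together with the finite complement it would then contain $\mathbb{N}$. So your subsequence maneuver, exploiting the universal quantifier over test sequences exactly as you say, yields a rigorous proof where the paper's own write-up has a hole. Your closing remark --- that under the stronger reading of Definition \ref{2} the same argument gives boundedness of $f$ near $x$ and hence a single uniform roughness degree --- is also correct, and is a point the paper does not address at all.
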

\begin{proof}
Let $I$ be an non trivial admissible ideal on $\mathbb{N}$ and $f: D\mapsto \mathbb{R}$ be rough $I$-continuous at $x\in D$ of some non zero roughness degree. If possible let the function $f: D(\subset \mathbb{R})\mapsto\mathbb{R}$ be not rough continuous at $x\in D$. Then there are a sequence $\{x_n\}$ converging to $x$ but there does not exists a non negative real number $r_x$ for which the  sequence $\{f(x_n)\}$ is rough convergent of roughness degree $r_x$ to $f(x)$. Now since $I$ is an admissible ideal therefore any sequence $\{x_n\}$ which convergent to $x$ is also $I$-convergent to $x$. Since for the sequence $\{x_n\}$ there does not exists a non negative real number $r_x$ for which the sequence $\{f(x_n)\}$ is rough convergent of roughness degree $r_x$ to $f(x)$, therefore there exists a $\varepsilon>0$ for which the set $\{n\in\mathbb{N}: |f(x_n) -f (x)|\geq r_x +\varepsilon\}=\mathbb{N}$ and as $I$ is a non trivial ideal so $\mathbb{N}\notin I$, therefore $f(x)$ is not rough $I$-convergent of roughness degree $r_x$ to $f(x)$, a contradiction. Hence the proof follows.
\end{proof}
\begin{defn}\label{defnII}
Let $I_1$ and $I_2$ be two admissible ideals. A function $f: D(\subset\mathbb{R})\mapsto\mathbb{R}$ is said to be rough $(I_1, I_2)$-continuous at $x\in D$, if for every sequence $\{x_n\}$ $I_1$-converging to $x$, there exists a non-negative real number $r_{(I_1.I_2)_x}$ such that $\{f(x_n)\}$ is rough $I_2$-convergent of roughness degree $r_{(I_1.I_2)_x}$ to $f(x)$. Now define $\rho_{(I_1, I_2)}=\sup\{r_{(I_1.I_2)_x}: x\in D\}$, if $\rho_{(I_1, I_2)}$ exists finitely, we then call $f$ is rough $(I_1,I_2)$ continuous of roughness degree $\rho_{(I_1, I_2)}$ throughout $D$.  
\end{defn}
Clearly if the $(I_1, I_2)$-roughness degree of $f$ at any point $x\in D$ becomes zero then rough $(I_1,I_2)$-continuity coincides with usual $(I_1, I_2)$-continuity of the function $f$ at $x$. Also it should be noted that the above definition reduces to rough $I$-continuity if $I_1=I_2=I$.
\begin{thm}
Let $I_1$ and $I_2$ be two admissible ideals on $\mathbb{N}$ such that $I_1\subset I_2$. Then a function $f: D(\subset\mathbb{R})\mapsto\mathbb{R}$ is rough $(I_1, I_2)$-continuous at a point $x\in D$ if and only if $f$ is rough $I_{fin}$-continuous at $x$, where $I_{fin}$ is the ideal of all finite subsets of $\mathbb{N}$.
\end{thm}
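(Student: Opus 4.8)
The plan is to route both equivalences through a single intermediate condition: \emph{local boundedness of $f$ at $x$}, meaning that there exist $\delta>0$ and $M>0$ with $|f(y)|\le M$ for all $y\in D$ with $|y-x|<\delta$. I claim that each of rough $(I_1,I_2)$-continuity and rough $I_{fin}$-continuity at $x$ is equivalent to this property, so that the stated biconditional follows by transitivity. Observe first that rough $I_{fin}$-continuity is literally rough continuity in the sense of Definition \ref{2}: since a set belongs to $I_{fin}$ exactly when it is finite, $I_{fin}$-convergence of a real sequence is ordinary convergence, and rough $I_{fin}$-convergence of degree $r$ is ordinary rough convergence of degree $r$.

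The heart of the argument is that either notion of continuity forces local boundedness, and I would prove this by contraposition. Suppose $f$ is not locally bounded at $x$. Then for every $n$ the restriction of $f$ to $B(x,1/n)\cap D$ is unbounded, so I may choose $x_n\in D$ with $|x_n-x|<1/n$ and $|f(x_n)|>n$; then $x_n\to x$ and, for $n>2|f(x)|$, one has $|f(x_n)-f(x)|\ge |f(x_n)|-|f(x)|>n/2$. For rough continuity this is already a contradiction, because $\{f(x_n)\}$ is unbounded and hence cannot be rough convergent of any finite degree to $f(x)$. For rough $(I_1,I_2)$-continuity, note that $x_n\to x$ makes $\{x_n\}$ also $I_1$-convergent to $x$ (as $I_1$ is admissible), yet for every candidate degree $r$ and every $\varepsilon>0$ the set $\{n\in\mathbb{N}:|f(x_n)-f(x)|\ge r+\varepsilon\}$ contains all sufficiently large $n$, so it is cofinite and therefore not a member of the nontrivial ideal $I_2$; thus $\{f(x_n)\}$ is not rough $I_2$-convergent to $f(x)$ of any degree. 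Either way the assumed continuity is violated, so local boundedness must hold.

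For the reverse implications I would assume $f$ is locally bounded at $x$, say $|f|\le M$ on $B(x,\delta)\cap D$. Given any ordinary sequence $x_n\to x$, eventually $x_n\in B(x,\delta)\cap D$, hence $|f(x_n)-f(x)|\le M+|f(x)|$ for all large $n$, so $\{f(x_n)\}$ is rough convergent (that is, rough $I_{fin}$-convergent) to $f(x)$ of degree $M+|f(x)|$; this gives rough $I_{fin}$-continuity. Given instead any sequence with $\{x_n\}$ $I_1$-convergent to $x$, the set $A=\{n:|x_n-x|\ge\delta\}$ lies in $I_1$, and for $n\notin A$ we again have $|f(x_n)-f(x)|\le M+|f(x)|$; therefore $\{n:|f(x_n)-f(x)|\ge (M+|f(x)|)+\varepsilon\}\subseteq A\in I_1\subseteq I_2$, so $\{f(x_n)\}$ is rough $I_2$-convergent to $f(x)$ of degree $M+|f(x)|$, giving rough $(I_1,I_2)$-continuity. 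This is the single place where the hypothesis $I_1\subseteq I_2$ enters.

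The main obstacle is the forward half of the boundedness characterization for rough $(I_1,I_2)$-continuity: the destroying sequence must be arranged so that $|f(x_n)-f(x)|$ grows along a \emph{cofinite} index set, for only then is the exceptional set guaranteed to escape the nontrivial ideal $I_2$ (an infinite but $I_2$-small exceptional set would yield no contradiction). Once local boundedness is isolated as the common middle term, the remaining steps are routine; I would verify only the two admissibility facts that make the endpoints go through, namely that ordinary convergence implies $I_1$-convergence and that cofinite sets never belong to the nontrivial admissible ideal $I_2$.
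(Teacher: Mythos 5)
Your proof is correct, and it takes a more structured route than the paper's. The paper argues the two directions directly: for the forward implication it invokes the earlier remark that rough continuity ``obviously'' implies rough $I_1$-continuity for an admissible ideal $I_1$ (a fact it never actually proves) and then upgrades rough $I_1$-convergence to rough $I_2$-convergence via $I_1\subset I_2$; for the converse it takes a sequence witnessing the failure of rough $I_{fin}$-continuity and asserts that the exceptional set $\{n:|f(x_n)-f(x)|\geq r+\varepsilon\}$ equals all of $\mathbb{N}$, hence lies outside $I_2$. That last assertion is loose: failure of rough convergence of every degree only says the exceptional sets are infinite, and an infinite set can perfectly well belong to a nontrivial admissible ideal. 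Your proposal repairs exactly these two soft spots by routing everything through local boundedness as the common pivot. In one direction your explicit construction $|x_n-x|<1/n$, $|f(x_n)|>n$ forces $|f(x_n)-f(x)|\to\infty$, so the exceptional sets are genuinely \emph{cofinite} and hence outside the nontrivial admissible ideal $I_2$ --- which is the correct replacement for the paper's ``$A(\varepsilon)=\mathbb{N}$''. In the other direction, the estimate $\{n:|f(x_n)-f(x)|\geq M+|f(x)|+\varepsilon\}\subseteq\{n:|x_n-x|\geq\delta\}\in I_1\subseteq I_2$ is precisely the missing proof of the implication the paper treats as obvious. The paper's argument is shorter modulo what it assumes; yours is self-contained, and as a bonus isolates a clean characterization (rough continuity at a point, rough $I_{fin}$-continuity, and rough $(I_1,I_2)$-continuity are all equivalent to local boundedness of $f$ at the point), which sits well with the boundedness theorems of Section 3 of the paper.
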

\begin{proof}
Let $\rho_{I_{fin}}$ be a non negative real number and the function $f: D(\subset\mathbb{R})\mapsto\mathbb{R}$ be rough $I_{fin}$-continuous at $x\in D$. Therefore, for every sequence $\{x_n\}$ which is $I_{fin}$-convergent to $x$ there exists a non negative real number $r_{{I_{fin}}_x}$ such that $\{f(x_n)\}$ is rough $I_{fin}$-convergent to $f(x)$ of roughness degree $r_{{I_{fin}}_x}$. Also let $I_1$ and $I_2$ be two ideals such that $I_1\subset I_2$. Let $\{x_n\}$ be $I_1$-convergent to $x$. Since $f$ is rough $I_{fin}$ continuous, it is rough continuous at $x$ and so it is rough $I_1$-continuous. So there exists a non negative real number $r_x$ such that $\{f(x_n)\}$ is rough $I_1$-convergent of roughness degree $r_x$ to $f(x)$. Since $I_1\subset I_2$ it follows that $\{f(x_n\}$ is rough $I_2$-convergent to $f(x)$ of roughness degree $r_x$.\\

Conversely, let $f$ be not rough $I_{fin}$-continuous at $x$. So there is a sequence $\{x_n\}$, $I_{fin}$-convergent to $x$ but $\{f(x_n)\}$ is not rough $I_{fin}$-convergent to $f(x)$. So for each non negative $r$, a suitable $\varepsilon >0$ be chosen such that the inequality $|f(x_n) - f(x_0)|\geq r +\varepsilon$ holds for $n=1, 2,\cdots$. Then $\{x_n\}$ is $I_1$-convergent to $x$ but $\{f(x_n\}$ is not rough $I_2$-convergent to $f(x)$, since the set $A(\varepsilon)=\{n\in\mathbb{N}: |f(x_n) - f(x)|\geq r +\varepsilon\}=\mathbb{N}\notin I_2$ for every non negative real $r$. So $f$ is not $(I_1, I_2)$ continuous at $x$. Hence if $f$ is $(I_1, I_2)$ rough continuous at $x$, it is rough $I_{fin}$ continuous at $x$.
\end{proof}

\begin{thm}\label{example}
Let $I_1$ and $I_2$ be two ideals on $\mathbb{N}$ such that $I_1\setminus I_2\neq \phi$ with the condition that there exists some $A\in I_1\setminus I_2$ with $\mathbb{N}\setminus A\notin I_2$. If the function $f: D(\subset \mathbb{R})\mapsto \mathbb{R}$ is constant then $f$ is rough $(I_1, I_2)$-continuous of any non zero roughness degree on $D$.
\end{thm}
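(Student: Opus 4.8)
The plan is to unwind Definition \ref{defnII} directly, exploiting the fact that a constant function sends every sequence to a single value, so that the ``bad sets'' which must lie in $I_2$ are actually empty. First I would fix the constant value, say $f(x)=c$ for all $x\in D$, and fix an arbitrary roughness degree $r>0$; the goal is to show that $r$ is an admissible $(I_1,I_2)$-roughness degree of $f$ at every point, which forces $\rho_{(I_1,I_2)}=r$ and hence exhibits $f$ as rough $(I_1,I_2)$-continuous of that nonzero degree.

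Next I would take an arbitrary point $x\in D$ and an arbitrary sequence $\{x_n\}$ that is $I_1$-convergent to $x$, and observe that $f(x_n)=c=f(x)$ for every $n$, so $|f(x_n)-f(x)|=0$. Consequently, for each $\varepsilon>0$ the set $\{n\in\mathbb{N}: |f(x_n)-f(x)|\geq r+\varepsilon\}$ is empty, and since $I_2$ is an ideal it contains $\phi$; hence $\{f(x_n)\}$ is rough $I_2$-convergent of roughness degree $r$ to $f(x)$. As $\{x_n\}$ was an arbitrary $I_1$-convergent sequence and $x$ an arbitrary point, $f$ is rough $(I_1,I_2)$-continuous at every $x\in D$ with $r_{(I_1,I_2)_x}=r$, so $\rho_{(I_1,I_2)}=\sup_x r_{(I_1,I_2)_x}=r$. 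Since $r>0$ was arbitrary, $f$ is rough $(I_1,I_2)$-continuous of any nonzero roughness degree, as claimed.

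The computation itself is immediate; the only genuine point to address, and the place where the standing hypothesis that some $A\in I_1\setminus I_2$ satisfies $\mathbb{N}\setminus A\notin I_2$ earns its keep, is to guarantee that the statement is not vacuous and that it genuinely contrasts with the earlier result that a constant function cannot be rough continuous of nonzero degree. Using such a set $A$ one can exhibit a sequence $\{x_n\}$ that is $I_1$-convergent to $x$ (take $x_n=x$ for $n\in\mathbb{N}\setminus A$ and $x_n$ arbitrary for $n\in A\in I_1$) whose index behaviour is not controlled by $I_2$, confirming that the $I_1$- and $I_2$-convergence notions really differ in this regime. I expect the main subtlety to be interpretational rather than computational: one must read ``of any nonzero roughness degree'' as the assertion that every $r>0$ is an admissible roughness degree, which holds precisely because the defining set is empty rather than merely small, and this is exactly what fails for ordinary rough continuity, where the degree is pinned to the actual oscillation of the values.
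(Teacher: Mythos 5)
Your proof is correct and takes essentially the same route as the paper's: since $f$ is constant, the set $\{n\in\mathbb{N}: |f(x_n)-f(x)|\geq r+\varepsilon\}$ is empty and hence belongs to $I_2$, for every $\varepsilon>0$ and every roughness degree $r\geq 0$, which is exactly the paper's one-line computation. Your side observation is also accurate: the hypothesis that some $A\in I_1\setminus I_2$ satisfies $\mathbb{N}\setminus A\notin I_2$ is never invoked in the computation (the paper's own proof does not use it either); it only serves to make the setting non-degenerate.
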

\begin{proof}
Suppose that the function $f: D(\subset \mathbb{R})\mapsto\mathbb{R}$ is a constant function and let $x\in D$. Let a sequence $\{x_n\}$ in $D$ be such that it is $I_1$-convergent to the point $x$ in $D$. Now since $f$ is constant, for any $\varepsilon>0$, we have $\{n\in\mathbb{N}: |f(x_n) - f(x)|\geq r+\varepsilon\}=\phi\in I_2$ where $r$ is any non negative number. Hence $f$ is rough $(I_1, I_2)$-continuous at $x$ of roughness degree $r_{{(I_1, I_2)}_x}=r\geq 0$.
\end{proof}
The converse of the theorem \ref{example} is not true in general as shown in the following example.
\begin{exmp}
Let us consider the function $f$ defined as $f(x)=\begin{cases}
1, \:\: \text{if}\: x \:\text{is rational}\\
0,\:\: \text{if}\: x \:\text{is irrational}
\end{cases}$. Now let us take two ideals $I_1$ and $I_2$ such that $I_1\setminus I_2\neq \phi$. Let $x\in\mathbb{R}$ be arbitrary and $\{x_n\}$ be a sequence $I_1$-convergent to $x$. Now as $|f(x_n) - f(x)|\leq 1< 1+\varepsilon$ for all $n\in\mathbb{N}$ and for any $\varepsilon >0$, therefore the set $\{n\in\mathbb{N}: |f(x_n) - f(x)|\geq 1 +\varepsilon\}=\phi\in I_2$. Hence $f$ is rough $(I_1, I_2)$-continuous of roughness degree $1$ although it is not a constant function.
\end{exmp}
\begin{defn}
A function $f: D(\subset \mathbb{R})\mapsto \mathbb{R}$ is said to be rough $I^*$-continuous at a point $x\in D$ if for every sequence $\{x_n\}$, $I^*$-converging to $x$, there exists a non negative real number $r_{{I^*}_x}$ for which  the sequence $\{f(x_n)\}$ is rough $I^*$-convergent of roughness degree $r_{{I^*}_x}$ to $f(x)$. Now if $\rho_{I^*}=\{r_{{I^*}_x}: x\in D\}$ exists finitely, then $f$ is called rough $I^*$-continuous of roughness degree $\rho_{I^*}$ throughout $D$. 
\end{defn}
Clearly, if for any point $x\in D$ the roughness degree $\rho_{I^*}$ becomes zero, then the rough $I^*$-continuity becomes $I^*$-continuity of the function $f$ at $x$.
\begin{thm}\label{roughI*}
If a function $f: D(\subset \mathbb{R})\mapsto\mathbb{R}$ is rough $I^*$-continuous at a point $x_0\in D$, then it is also rough $I$-continuous at $x_0$.
\end{thm}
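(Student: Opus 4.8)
The plan is to isolate one transfer principle and let the theorem fall out of it: \emph{if a real sequence is rough $I^{*}$-convergent of degree $r$ to a limit, then it is rough $I$-convergent of the same degree $r$ to that limit}. This is the rough counterpart of the implication ``$I^{*}$-convergence $\Rightarrow$ $I$-convergence'' recalled in Section~2, and granting it the theorem is almost immediate. Indeed, if $f$ is rough $I^{*}$-continuous at $x_{0}$ and $\{x_n\}$ is a sequence $I^{*}$-converging to $x_{0}$, the hypothesis yields a non-negative $r$ for which $\{f(x_n)\}$ is rough $I^{*}$-convergent of degree $r$ to $f(x_{0})$; the transfer principle then upgrades this to rough $I$-convergence of $\{f(x_n)\}$ of degree $r$ to $f(x_{0})$, which is exactly what the definition of rough $I$-continuity demands, with $r_{I_{x_0}}=r$.

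So the first and main step I would carry out is the transfer principle itself. Suppose $\{y_n\}$ is rough $I^{*}$-convergent of degree $r$ to $y$, so there is a set $M=\{m_{1}<m_{2}<\cdots\}\in F(I)$ along which the subsequence $\{y_{m_k}\}$ is rough convergent of degree $r$ to $y$. Fix $\varepsilon>0$ and set $A(\varepsilon)=\{n\in\mathbb{N}:|y_n-y|\geq r+\varepsilon\}$. By rough convergence along $M$ there is an $N$ with $|y_{m_k}-y|<r+\varepsilon$ for all $k\geq N$, so $A(\varepsilon)$ can meet $M$ in at most the finitely many indices $m_{1},\ldots,m_{N-1}$; hence $A(\varepsilon)\subseteq(\mathbb{N}\setminus M)\cup\{m_{1},\ldots,m_{N-1}\}$. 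Now $M\in F(I)$ gives $\mathbb{N}\setminus M\in I$, admissibility of $I$ puts the finite set $\{m_{1},\ldots,m_{N-1}\}$ in $I$, and closure of $I$ under finite unions together with the hereditary property forces $A(\varepsilon)\in I$. Since $\varepsilon>0$ was arbitrary, $\{y_n\}$ is rough $I$-convergent of degree $r$ to $y$, which establishes the principle.

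The step I expect to be delicate is not this computation but the alignment of the two definitions on the \emph{input} side. Rough $I$-continuity quantifies over every sequence $I$-converging to $x_{0}$, whereas the hypothesis only provides information about sequences $I^{*}$-converging to $x_{0}$, and $I^{*}$-convergence is a priori the stronger notion. In writing the proof I would therefore feed the transfer principle only those input sequences for which the rough $I^{*}$-continuity hypothesis genuinely applies, and record explicitly where one passes from an $I$-convergent to an $I^{*}$-convergent input sequence; this is precisely the juncture at which the property $(AP)$ enters the non-rough theory, and it is the point most in need of justification here. Once the transfer principle and this input alignment are settled, the rest is routine bookkeeping: the degree $r$ carries over unchanged, and the conclusion for $f(x_{0})$ holds uniformly in $\varepsilon$.
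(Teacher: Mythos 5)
Your transfer principle --- rough $I^{*}$-convergence of degree $r$ implies rough $I$-convergence of the same degree $r$ --- is correct, and your proof of it is sound; in fact this principle, together with the elementary fact that $I^{*}$-convergent sequences are $I$-convergent, is the \emph{entire} content of the paper's own proof, which merely cites these two implications and declares the theorem immediate. So up to that point you are doing exactly what the paper does, only more carefully. The problem is the step you yourself flag as ``delicate'' and then leave open: it is a genuine gap, not a formality. What the transfer principle yields is only the statement ``for every sequence $I^{*}$-convergent to $x_{0}$, the image sequence is rough $I$-convergent to $f(x_{0})$ of some degree.'' Rough $I$-continuity at $x_{0}$ demands this for every sequence $I$-convergent to $x_{0}$, which is a strictly larger class of inputs whenever $I$ fails the property (AP); for such inputs the hypothesis of rough $I^{*}$-continuity cannot be invoked at all. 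Your proposed fix --- passing from $I$-convergent to $I^{*}$-convergent inputs via (AP) --- would prove a weaker theorem than the one stated, since the theorem assumes only that $I$ is a non-trivial admissible ideal. (The paper's one-line proof silently commits the same quantifier error; your write-up at least locates it.)

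The gap can in fact be closed without (AP), by extracting more from the hypothesis than the transfer principle does. First, rough $I^{*}$-continuity at $x_{0}$ forces $f$ to be bounded on $D\cap(x_{0}-\delta,x_{0}+\delta)$ for some $\delta>0$: if not, choose $y_{k}\in D$ with $|y_{k}-x_{0}|<1/k$ and $|f(y_{k})|>k$; then $\{y_{k}\}$ converges to $x_{0}$ in the ordinary sense, hence is $I^{*}$-convergent to $x_{0}$ (take $M=\mathbb{N}\in F(I)$), yet $\{f(y_{k})\}$ cannot be rough $I^{*}$-convergent to $f(x_{0})$ of any degree $r$, because every $M'=\{m_{1}<m_{2}<\cdots\}\in F(I)$ is infinite (here non-triviality and admissibility of $I$ are used) and $|f(y_{m_{k}})|>m_{k}\to\infty$ along it, contradicting the requirement $|f(y_{m_{k}})-f(x_{0})|<r+1$ for all large $k$. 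Second, local boundedness finishes the argument: let $|f|\leq B$ on $D\cap(x_{0}-\delta,x_{0}+\delta)$ and let $\{x_{n}\}\subset D$ be any sequence $I$-convergent to $x_{0}$. Then $A:=\{n\in\mathbb{N}:|x_{n}-x_{0}|\geq\delta\}\in I$, and for $n\notin A$ we have $|f(x_{n})-f(x_{0})|\leq B+|f(x_{0})|$, so for every $\varepsilon>0$ the set $\{n\in\mathbb{N}:|f(x_{n})-f(x_{0})|\geq B+|f(x_{0})|+\varepsilon\}$ is contained in $A$ and hence lies in $I$. Thus $\{f(x_{n})\}$ is rough $I$-convergent to $f(x_{0})$ of degree $B+|f(x_{0})|$, and $f$ is rough $I$-continuous at $x_{0}$. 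Supplementing your transfer principle with this local-boundedness step turns your proposal into a complete proof of the theorem as stated --- something neither your draft nor the paper's own proof currently achieves.
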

\begin{proof}
Since for a sequence $\{x_n\}$ of real numbers $I^*$-convergence implies $I$-convergence and rough $I^*$-limit of $\{x_n\}$ implies that it is also a rough $I$-limit, so the theorem follows immediately.
\end{proof}
The converse of the Theorem \ref{roughI*} is also true if we have an ideal $I$ which has the property (AP). 
\begin{thm}
Let $I$ be an ideal such that it has the property (AP). Then if a function $f: D(\subset\mathbb{R})\mapsto\mathbb{R}$ is rough $I$-continuous at a point $x_0\in D$, then it is also rough $I^*$-continuous at $x_0$.
\end{thm}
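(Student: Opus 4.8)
The plan is to reduce the statement to the rough analogue of the classical fact, already recalled in the preliminaries, that under property (AP) one has $I$-convergence $\Rightarrow I^*$-convergence. First I would take an arbitrary sequence $\{x_n\}$ in $D$ that is $I^*$-convergent to $x_0$. Since $I^*$-convergence implies $I$-convergence, the sequence $\{x_n\}$ is $I$-convergent to $x_0$. Invoking the hypothesis that $f$ is rough $I$-continuous at $x_0$, I obtain a non-negative real number $r$ (namely ${r_I}_{x_0}$) such that $\{f(x_n)\}$ is rough $I$-convergent of roughness degree $r$ to $f(x_0)$; that is, $\{n\in\mathbb{N}: |f(x_n)-f(x_0)|\geq r+\varepsilon\}\in I$ for every $\varepsilon>0$.

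The crux is then to upgrade this rough $I$-convergence of $\{f(x_n)\}$ to rough $I^*$-convergence, which is exactly where property (AP) enters. Writing $y_n=f(x_n)$ and $y=f(x_0)$, for each $j\in\mathbb{N}$ I would set $A_j=\{n\in\mathbb{N}: |y_n-y|\geq r+1/j\}$; each $A_j$ lies in $I$ and they form an increasing chain $A_1\subseteq A_2\subseteq\cdots$, since a smaller threshold $r+1/j$ is satisfied by more indices. To produce the mutually disjoint family required by (AP), I would put $P_1=A_1$ and $P_j=A_j\setminus A_{j-1}$ for $j\geq 2$; each $P_j\subseteq A_j\in I$ lies in $I$, they are pairwise disjoint, and $\bigcup_{i=1}^{j}P_i=A_j$. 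Applying (AP) yields a family $\{B_j\}$ with $P_j\Delta B_j$ finite for every $j$ and $B=\bigcup_{j=1}^\infty B_j\in I$, so that $M=\mathbb{N}\setminus B\in F(I)$.

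It then remains to check that the subsequence $\{y_m\}_{m\in M}$ is rough convergent of roughness degree $r$ to $y$. Given $\varepsilon>0$, I would fix $j$ with $1/j<\varepsilon$. Because $P_1\Delta B_1,\dots,P_j\Delta B_j$ are all finite, there is an index $N$ such that for every $m>N$ and every $i\leq j$ one has $m\in P_i\Rightarrow m\in B_i$. Then for $m\in M$ with $m>N$, the fact that $m\notin B$ (hence $m\notin B_i$ for all $i$) forces $m\notin P_i$ for all $i\leq j$, whence $m\notin A_j$ and therefore $|y_m-y|<r+1/j<r+\varepsilon$. This shows $\{f(x_n)\}$ is rough $I^*$-convergent of roughness degree $r$ to $f(x_0)$, and setting $r_{{I^*}_{x_0}}=r$ delivers rough $I^*$-continuity of $f$ at $x_0$.

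The main obstacle I anticipate is the disjointification-and-(AP) step of the second paragraph together with the bookkeeping of the third: one must confirm that $\{A_j\}$ is genuinely increasing so that the $P_j$ sit inside $I$, and then track the finitely many exceptional indices coming from the symmetric differences $P_i\Delta B_i$ in order to pin down the threshold $N$. Everything else is a routine appeal to the implication $I^*\Rightarrow I$ and to the definition of rough $I$-continuity. Should a statement to the effect that rough $I$-convergence implies rough $I^*$-convergence under (AP) already be available in the literature on rough $I^*$-convergence, the second and third paragraphs would collapse into a single citation, leaving only the reduction carried out in the first paragraph.
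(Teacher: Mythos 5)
Your proposal is correct and follows essentially the same route as the paper: take an $I^*$-convergent (hence $I$-convergent) sequence, apply rough $I$-continuity to get that $\{f(x_n)\}$ is rough $I$-convergent of degree $r$ to $f(x_0)$, and upgrade this to rough $I^*$-convergence of the same degree using (AP). The only difference is one of completeness: the paper dispatches the upgrade in a single sentence, quoting as known that under (AP) a rough $I$-limit is also a rough $I^*$-limit of the same roughness degree, whereas your second and third paragraphs prove exactly this fact via the standard disjointification-plus-(AP) argument (correctly adapted to the thresholds $r+1/j$, with the sets $A_j$ increasing and $\bigcup_{i\leq j}P_i=A_j$), so your write-up is self-contained where the paper's relies on a citation --- precisely the situation you anticipated in your closing remark.
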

\begin{proof}
We know that if an ideal $I$ has the property (AP) then for a sequence $\{x_n\}$, $I$-convergence implies $I^*$-convergence. Again if an ideal $I$ has the property (AP), then rough $I$-limit of $\{x_n\}$ of some roughness degree $r$ is also a rough $I^*$-limit of $\{x_n\}$ of same roughness degree $r$. So the proof follows.
\end{proof}

\end{document}